\theoremstyle{plain}
\newtheorem{thm}{Theorem}
\newtheorem{lem}[thm]{Lemma}
\theoremstyle{definition}
\newtheorem{rmk}[thm]{Remark}
\numberwithin{equation}{section}
\newcommand{\Hom}{{\rm Hom}}
\newcommand{\Char}{{\rm char}}
\newcommand{\sym}{\mathrm{Sym}}
\newcommand{\Ind}{\text{Ind}}
\newcommand{\SL}{\mathrm{SL}}
\newcommand{\GL}{\mathrm{GL}}
\newcommand{\A}{{\mathbb A}}
\newcommand{\E}{{\mathbb E}}
\newcommand{\F}{{\mathbb F}}
\newcommand{\Z}{{\mathbb Z}}
\newcommand{\ZB}{{\mathbf Z}}
\newcommand{\reg}{\mathbf{reg}}
\begin{document}
\title{Symmetric powers}
\author{J\'anos Koll\'ar and Pham Huu Tiep}
\address{Department of Mathematics, Princeton University, Princeton, NJ 08544}
\email{kollar@math.princeton.edu}
\address{Department of Mathematics, Rutgers University, Piscataway, NJ 08854}
\email{tiep@math.rutgers.edu}
\thanks{The first author was partially supported by the NSF (grant DMS-1901855).}
\thanks{The second author gratefully acknowledges the support of the NSF (grant
DMS-2200850), the Simons Foundation, and the Joshua Barlaz Chair in Mathematics.}
\thanks{Part of this work was done when the second author visited Princeton University. 
It is a pleasure to thank Princeton University for generous hospitality and stimulating 
environment.}
\maketitle

Given a faithful (linear) action of a finite group $G$ on a finite-dimensional vector space $V$ over a field $\F$, it is 
a classical theme to study the {\it invariants} $\F[V]^G$, or {\it semi-invariants}, i.e. the one-dimensional $G$-submodules  
in $\F[V]$. One is particularly interested in the first nontrivial occurrence (in $\sym^m(V)$ with $m \geq 1$) 
of invariants, or semi-invariants see e.g. \cite{T}. 

More generally, one can ask whether (and when) a given irreducible 
$\F G$-module $W$ occurs, as a submodule or as a quotient, of $\sym^m(V)$. Motivated in particular by the study of automorphism
groups of rings of invariants, see e.g. \cite[Examples 10, 14]{K}, we give some answers to these questions.  

\section{Algebraically closed field case}
\begin{thm}\label{inf}
Let $\F$ be an algebraically closed field, $G$ any finite group, and $V = \F^n$ any finite-dimensional, faithful $\F G$-module. Let $W$ be 
any irreducible $\F G$-module $W$. Then there is some integer $1 \leq m \leq |G|$ (depending on $W$) such that the following 
statements hold.

\begin{enumerate}[\rm(i)]
\item $W$ is isomorphic to a submodule of $\sym^m(V)$. 
\item In fact, for any $k \in \Z_{\geq 0}$, $W$ is isomorphic to a submodule of $\sym^{m+k|G|}(V)$.
\item For any $k \in \Z_{\geq 0}$, $W$ is a quotient of $\sym^{m+k|G|}(V)$.
\end{enumerate}
\end{thm}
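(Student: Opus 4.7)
My plan proceeds via two constructions, one handling the arithmetic-progression periodicity and one handling the existence. For part (ii), take any nonzero $v \in V$ and set $F := \prod_{g \in G} g\cdot v \in \sym^{|G|}(V)$. Then $F$ is $G$-invariant (since $G$ permutes the factors) and nonzero (since $\sym^\bullet(V)$ is an integral domain), so multiplication by $F$ gives an injective $G$-equivariant map $\sym^m(V) \hookrightarrow \sym^{m+|G|}(V)$; iteration shows (ii) is a consequence of (i).

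For parts (i) and (iii), let $Z \leq G$ be the (central) subgroup of elements that act on $V$ by a scalar. Faithfulness of $V$ realizes $Z$ as a subgroup of $\F^\times$, so $Z$ is cyclic of order coprime to $\operatorname{char}\F$, and $\F Z$ is semisimple. Choose a generic $\alpha \in V^*$ with $\Stab_G(\alpha) = 1$ and set $C := \{0\} \cup \F^\times \cdot G\alpha \subset V^*$, the cone over the orbit; this is a union of $|\bar G| := |G|/|Z|$ distinct lines through the origin (the scalar-stabilizer of a generic line is precisely $Z$). The homogeneous coordinate ring $\F[C] := \sym^\bullet(V)/I(C)$ is a $G$-equivariant graded quotient of $\sym^\bullet(V)$; a Lagrange-interpolation argument ($N$ distinct lines through the origin impose $N$ independent conditions on forms of degree $\geq N-1$) gives $\dim \F[C]_m = |\bar G|$ for all $m \geq |\bar G| - 1$. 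A character computation identifies $\F[C]_m \cong \Ind_Z^G(\chi_m)$ as $\F G$-modules, where $\chi_m(z) := \mu_z^m$ is the character of $Z$ acting on degree-$m$ forms ($\mu_z$ being the scalar by which $z$ acts on $V$). Since $\chi_m$ is projective over the semisimple algebra $\F Z$, $\Ind_Z^G(\chi_m)$ is a projective $\F G$-module.

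To extract $W$ from $\F[C]_m$, note that by Schur's lemma $Z$ acts on $W$ via some character $\chi_W$, and since $\mu$ generates $\widehat{Z}$, there is a unique $m_0 \in \{0, 1, \ldots, |Z|-1\}$ with $\mu^{m_0} = \chi_W$. Let $m$ be the least positive integer satisfying $m \geq |\bar G| - 1$ and $m \equiv m_0 \pmod{|Z|}$; using $|\bar G| \cdot |Z| = |G|$ one checks $1 \leq m \leq |G|$. Frobenius reciprocity gives $\Hom_G(W, \F[C]_m) \neq 0$ and $\Hom_G(\F[C]_m, W) \neq 0$, so $W$ is both a submodule and a quotient of $\F[C]_m$. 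The projectivity of $\F[C]_m$ allows the surjection $\sym^m(V) \twoheadrightarrow \F[C]_m$ to split, exhibiting $\F[C]_m$---and hence $W$---as a direct summand of $\sym^m(V)$. This establishes (i) and (iii) for this $m$; (iii) at $m + k|G|$ follows since $|Z| \mid |G|$ preserves the residue class mod $|Z|$, so the same argument applies verbatim at every higher degree in the progression. The main obstacle is the identification $\F[C]_m \cong \Ind_Z^G(\chi_m)$, which is essential for handling the case $Z \neq 1$ (e.g.\ a faithful $1$-dimensional representation of a cyclic group), where there is no direct surjection $\sym^m(V) \twoheadrightarrow \F G$ and one genuinely needs the twisted induced module.
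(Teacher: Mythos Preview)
Your proposal is correct and rests on the same core ideas as the paper---isolate the scalar subgroup $Z\le G$, produce a copy of $\Ind^G_Z(\chi)$ inside $\sym^m(V)$ for a suitable character $\chi$ of $Z$, invoke Frobenius reciprocity, and use that $\Ind^G_Z(\chi)$ is projective (since $\F Z$ is semisimple) to pass between ``submodule'' and ``quotient''. The multiplication-by-$\prod_{g\in G}g\cdot v$ trick for (ii) is exactly the paper's.

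The execution differs in an interesting, essentially dual way. The paper picks a generic $v\in V$ and writes down the explicit products $F(\bar g)=\prod_{\bar h\neq\bar g}\bar h(v)$, checks linear independence by a UFD argument, and thereby builds $\Ind^G_Z(\lambda^{(N-1)j})$ as a \emph{submodule} $A_j\subset\sym^{(N-1)j}(V)$; a further twist by the one-dimensional $B_i$ adjusts the central character. Your approach instead picks a generic $\alpha\in V^*$, forms the orbit cone $C\subset V^*$, and realizes $\Ind^G_Z(\chi_m)$ as the degree-$m$ piece of the graded \emph{quotient} $\F[C]=\sym^\bullet(V)/I(C)$; projectivity then splits the surjection, yielding submodule and quotient simultaneously. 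The paper's $F(\bar g)$ are, in effect, the Lagrange interpolation basis that witnesses your surjectivity claim, so the two constructions are mirror images of one another. What your route buys is uniformity: you get (i) and (iii) in one stroke and avoid the auxiliary twist $B_i$, since the quotient $\F[C]_m$ already exists in every degree $m\ge|\bar G|-1$. What the paper's route buys is that part (i) is established by a direct, elementary construction that does not invoke projectivity at all; projectivity enters only for (iii) in the modular case.
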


\begin{proof}
(i) Fixing a basis $(x_1, \ldots,x_n)$ of $V$, we can
identify $\sym^m(V)$ with the space of degree $m$ homogeneous polynomials in $n$ variables $x_1, \ldots,x_n$.

Let $Z:= \ZB(\GL(V)) \cap G$. Since $Z$ is isomorphic to a finite subgroup of $\F^\times$, $Z$ is cyclic. The statement is obvious in
the case $G = Z$, so we will assume $G > Z$ throughout.
Note that any element $g \in G \smallsetminus Z$ does not act as a scalar on $V$, and hence each of its 
eigenspaces in $V$ is a proper subspace of $V$, giving a proper closed subset of $V$ viewed as $\A^n_\F$. Since $\A^n_\F$ is an 
irreducible affine variety and $G$ is finite, the union of all these eigenspaces, when $g$ runs over $G \smallsetminus Z$, 
is a proper closed subset of $V$. 
It follows that we can find $0 \neq v \in V$ such that no element $g \in G \smallsetminus Z$ can fix the line $\langle v \rangle_\F$.

For any coset $\bar{h}= hZ$, note that the line $\langle h(v) \rangle_\F$ does not depend on the choice of the representative $h \in G$;
we will use $\bar{h}(v)$ to denote $h(v)$ for some choice of $h$.  Now we set
\begin{equation}\label{prod1}
  F(\bar{g}) := \prod_{\bar{h} \in G/Z \smallsetminus \{\bar{g}\}}\bar{h}(v) \in \sym^{N-1}(V),
\end{equation}  
where $N:= |G/Z| > 1$. By this definition, for any $g,h \in G$ and any $j \in \Z_{\geq 1}$, we see that $g$ sends  the line
$\langle F(\bar{h})^j\rangle_\F$ to $\langle F(\bar{g}\bar{h})^j\rangle_\F$. In particular, $G/Z$ permutes the $N$ lines 
$\langle F(\bar{g})^j\rangle_\F$, $\bar{g} \in G/Z$, regularly.

\smallskip
Next we show that, for any $j \in \Z_{\geq 1}$,  the $N$ polynomials $F(\bar{g})^j$, $\bar{g} \in G/Z$, are linearly independent.
Assume the contrary:
\begin{equation}\label{prod2}
  \sum_{\bar{g} \in G/Z}a_{\bar{g}}F(\bar{g})^j = 0
\end{equation}  
for some constants $a_{\bar{g}} \in \F$, and $a_{\bar{g}_0} \neq 0$. Applying $g_0^{-1}$ to this relation, we may assume that $a_{\bar{1}} \neq 0$.
For any $\bar{g} \neq \bar{1}$, the definition \eqref{prod1} implies that the polynomial $F(\bar{g})$ is divisible by $v$ in 
$k[x_1, \ldots ,x_n]$. As $a_{\bar{1}} \neq 0$, it follows from \eqref{prod2} that $F(\bar{1})^j$ is divisible by $v$ in 
$k[x_1, \ldots,x_n]$ which is a unique factorization domain. Using \eqref{prod1}, we see that there is some $h \in G \smallsetminus Z$ 
such that $h(v)$ is a multiple of $v$, contrary to the choice of $v$.

\smallskip
We have shown that the $\F G$-submodule 
$$A_j := \langle F(\bar{g})^j \mid \bar{g} \in G/Z \rangle_{\F}$$
of $\sym^{(N-1)j}(V)$ has dimension $N$ and is induced from the one-dimensional $\F Z$-module $\langle F(\bar{1})^j \rangle_\F$.
In fact,
$$A_j = \Ind^G_Z\bigl(\lambda^{(N-1)j}\bigr),$$
if $\lambda$ denotes the linear character via which $Z$ acts on $V$.

\smallskip
Clearly, $\lambda:Z \to \F^\times$ is injective; also, $Z \leq \ZB(G)$. Now we turn to $W$ and note that, since $W$ is irreducible,
$Z$ acts on $W$ via scalars by Schur's lemma. But $Z$ is cyclic, so there is some integer $0 \leq i \leq |Z|-1$ such that 
$Z$ acts on $W$ via the character $\lambda^i$. 
We also choose 
$$j:= |Z|-i \geq 1$$ 
and note that
\begin{equation}\label{prod3}
  Ni+(N-1)j = (N-1)|Z|+i \equiv i \pmod{|Z|}.
\end{equation}
Also consider    
$$B_i := \langle \prod_{\bar{h} \in G/Z}(\bar{h}(v)^i \rangle_{\F},$$
which is a one-dimensional $\F G$-submodule of $\sym^{Ni}(V)$. Then $Z$ acts on $B_i$ via the character $\lambda^{Ni}$. It follows
from \eqref{prod3} that $Z$ acts on the $\F G$-submodule $A_jB_i$ in $\sym^m(V)$ with character $\lambda^m=\lambda^i$, where
$$m:= Ni+(N-1)j = N|Z|-j < |G|.$$
Since 
$$A_jB_i = \Ind^G_Z\bigl(\lambda^{i}\bigr),$$
we now have 
$$0 \neq \Hom_Z(W|_Z,\lambda^i) \cong \Hom_G(W,A_jB_i)$$
by Frobenius reciprocity. As $W$ is irreducible, it follows that $W$ is submodule of $A_jB_i$ and hence of 
$\sym^m(V)$.

Similarly, 
\begin{equation}\label{quot1}
  0 \neq \Hom_Z(\lambda^i,W|_Z) \cong \Hom_G(A_jB_i,W),
\end{equation}  
which shows that $W$ is a quotient of $A_jB_i$.

\smallskip
(ii) We can argue as above, but replacing $A_jB_i$ by $A_jB_iC$, where 
$$C = \langle \prod_{h \in G}h(v)^k\rangle_\F,$$
which is a trivial $\F G$-submodule in $\sym^{k|G|}(V)$.

\smallskip
(iii) Under the extra assumption that $\Char(\F)=0$ or 
$\Char(\F) > 0$ is coprime to $|G|$, the $G$-module $\sym^{m+k|G|}(V)$ is semisimple. 
In this case, the statement follows from (i) and (ii).

Assume now that $\Char(\F)=p$ is a divisor of $|G|$. Since the finite subgroup $Z$ embeds in $\F^\times$, $Z$ is a cyclic group
of $p'$-order, and the regular $\F Z$-module $\reg_Z$ is semisimple, in fact isomorphic to the direct sum
$$\bigoplus^{|Z|-1}_{t=0}\lambda^t$$
(where we have identified one-dimensional modules with their Brauer characters). In turn, this implies that
the regular $\F G$-module
$$\reg_G = \Ind^G_1(1) \cong \Ind^G_Z(\Ind^Z_1(1)) = \Ind^G_Z(\reg_Z)$$  
is isomorphic to the direct sum
$$\bigoplus^{|Z|-1}_{t=0}\Ind^G_Z(\lambda^t).$$
Thus the submodule $A_jB_iC$ in (ii) is a direct summand of $\reg_G$, and hence projective and injective, see \cite[Lemma 45.3]{D}. 
Since $A_jB_iC$ is 
a submodule of $\sym^{m+k|G|}(V)$, it follows that it is also a quotient of $\sym^{m+k|G|}(V)$. But $A_jB_iC$ projects onto
$W$ by \eqref{quot1}, so $W$ is a quotient of $\sym^{m+k|G|}(V)$.
\end{proof}

\section{General case}

First we recall a well-known fact:

\begin{lem}\label{dim}
Let $G$ be a finite group, $\F$ any field, $U$ and $V$ any finite-dimensional $\F G$-modules. For any extension $\E/\F$, we have
$$\dim_\E\Hom_G(U \otimes_\F \E,V \otimes_\F \E) = \dim_\F \Hom_G(U,V).$$
\end{lem}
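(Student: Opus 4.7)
The plan is to realize $\Hom_G(U,V)$ as the kernel of an $\F$-linear map and then invoke flatness of field extensions, so that taking the kernel commutes with $- \otimes_\F \E$.

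More precisely, I would set $M := \Hom_\F(U,V)$, an $\F G$-module under $(g\cdot f)(u) = g\bigl(f(g^{-1}u)\bigr)$, and use the identification $\Hom_G(U,V) = M^G$. The fixed subspace $M^G$ is the kernel of the $\F$-linear map
$$\Phi: M \longrightarrow \prod_{g \in G} M, \quad f \longmapsto (gf - f)_{g \in G}.$$
The standard natural isomorphism $\Hom_\F(U,V) \otimes_\F \E \cong \Hom_\E(U \otimes_\F \E,\, V \otimes_\F \E)$ (which one checks on a basis of $U$, using finite-dimensionality) is $\E G$-equivariant, and identifies $\Phi \otimes_\F \id_\E$ with the analogous map $\Phi_\E$ whose kernel is $\Hom_G(U \otimes_\F \E, V \otimes_\F \E)$.

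Since $\E$ is a flat $\F$-module (every field extension is free), the functor $- \otimes_\F \E$ is exact and hence commutes with kernels, giving
$$\Hom_G(U \otimes_\F \E,\, V \otimes_\F \E) \cong \Ker(\Phi_\E) = \Ker(\Phi) \otimes_\F \E = \Hom_G(U,V) \otimes_\F \E.$$
Taking $\E$-dimensions, and using that tensoring a finite-dimensional $\F$-space with $\E$ preserves dimension, yields the claimed equality.

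The only step requiring a little care is the identification $\Hom_\F(U,V) \otimes_\F \E \cong \Hom_\E(U_\E, V_\E)$ in a $G$-equivariant way, but this is classical and holds because $U$ is finite-dimensional; beyond this, the argument is a routine application of flat base change, so I do not anticipate a real obstacle.
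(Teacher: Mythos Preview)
Your proof is correct and follows essentially the same approach as the paper: both identify $\Hom_G(U,V)$ with the $G$-fixed subspace of $\Hom_\F(U,V)\cong V^*\otimes_\F U$, realize this as the kernel of a linear map, and use that this kernel is stable under base change. The only difference is packaging: the paper writes out an explicit block matrix $A$ and invokes the invariance of rank under field extension, whereas you phrase the same fact as flatness of $\E$ over $\F$.
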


\begin{proof}
Note that 
$$\Hom(U \otimes_\F \E,V \otimes_\F \E) \cong (V^* \otimes_\F U) \otimes_\F \E$$
as $\E G$-modules, and hence the $\E$-vector space $\Hom_G(U \otimes_\F \E,V \otimes_\F \E)$ is isomorphic to
the $G$-fixed point subspace $\tilde X$ of $(V^* \otimes_\F U) \otimes_\F \E$. Similarly, the $\F$-vector space 
$\Hom_G(U,V)$ is isomorphic to the $G$-fixed point subspace $X$ of $V^* \otimes_\F U$. Note that 
$$\dim_\F X = \dim_\E\tilde X.$$
(Indeed, fix a basis $(e_1, \ldots, e_n)$ of $V^* \otimes U$, and let $A(g)$ be the matrix of the action of $g \in G$ relative to this
basis. List the elements in $G$ as $(g_1, \ldots, g_N)$ for $N:=|G|$, and set 
$$A:=[A(g_1);A(g_2); \ldots; A(g_N)] \in \mathrm{Mat}_{nN,n}(\F).$$
Then $X$ is the solution space of the linear system $A\mathbf{x}=\mathbf{0}$ over $\F$, and    
$\tilde X$ is the solution space of the linear system $A\mathbf{x}=\mathbf{0}$ over $\E$.)
Hence the statement follows.
\end{proof}

\begin{thm}\label{main1}
Let $\F$ be any field, $G$ any finite group, and $V = \F^n$ any finite-dimensional, faithful $\F G$-module. Let $W$ be 
any irreducible $\F G$-module $W$. Then there is some integer $1 \leq m \leq |G|$ (depending on $W$) such that the following 
statements hold.
\begin{enumerate}[\rm(i)]
\item $W$ is isomorphic to a submodule of $\sym^m(V)$. 
\item $W$ is isomorphic to a quotient of $\sym^m(V)$. 
\item In fact, for any $k \in \Z_{\geq 0}$, $W$ is isomorphic to a submodule of $\sym^{m+k|G|}(V)$, and 
to a quotient of $\sym^{m+k|G|}(V)$.
\end{enumerate}
\end{thm}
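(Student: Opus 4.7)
The plan is to reduce Theorem~\ref{main1} to Theorem~\ref{inf} by base-changing to an algebraic closure $\bar{\F}$ of $\F$, with Lemma~\ref{dim} serving as the bridge that transfers nonvanishing of $\Hom_G$-spaces back to $\F$. I would set $\bar{V}:=V\otimes_\F\bar{\F}$ and $\bar{W}:=W\otimes_\F\bar{\F}$; then $G$ still acts faithfully on $\bar{V}$, and the natural identification $\sym^m(V)\otimes_\F\bar{\F}\cong\sym^m(\bar{V})$ would let us compare $\Hom_G$-spaces on both sides.

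Next, I would pick an irreducible $\bar{\F} G$-constituent $W_0$ of $\bar{W}$ that occurs both as a submodule and as a quotient of $\bar{W}$. Applying Theorem~\ref{inf} to the pair $(\bar{V},W_0)$ will produce an integer $1\leq m\leq|G|$ such that, for every $k\geq 0$, $W_0$ is simultaneously a submodule and a quotient of $\sym^{m+k|G|}(\bar{V})$. Composing $\bar{W}\twoheadrightarrow W_0\hookrightarrow\sym^{m+k|G|}(\bar{V})$ and $\sym^{m+k|G|}(\bar{V})\twoheadrightarrow W_0\hookrightarrow\bar{W}$ will yield nonzero elements in $\Hom_G(\bar{W},\sym^{m+k|G|}(\bar{V}))$ and $\Hom_G(\sym^{m+k|G|}(\bar{V}),\bar{W})$ respectively. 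Lemma~\ref{dim} then forces $\Hom_G(W,\sym^{m+k|G|}(V))\neq 0$ and $\Hom_G(\sym^{m+k|G|}(V),W)\neq 0$; since $W$ is irreducible over $\F$, any such nonzero $\F G$-map is automatically an embedding or a surjection, delivering parts (i), (ii), and (iii) with a single $m\leq|G|$.

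The main obstacle will be to justify the choice of $W_0$ lying simultaneously in the socle and in the head of $\bar{W}$, which is immediate once $\bar{W}$ is known to be semisimple. Using the Wedderburn description $\F G/\mathrm{ann}(W)\cong M_n(D^{\mathrm{op}})$ with $D=\mathrm{End}_{\F G}(W)$ a finite-dimensional division $\F$-algebra, semisimplicity of $\bar{W}$ reduces to that of $D\otimes_\F\bar{\F}$; this holds whenever $\F$ is perfect, covering characteristic zero, characteristic coprime to $|G|$ (Maschke), and all finite base fields. For the residual non-perfect modular case I would either descend $W$ to a perfect subfield of $\F$ over which it is defined, or verify semisimplicity of $D\otimes_\F\bar{\F}$ directly using that $Z(D)/\F$ is separable for group algebras of finite groups; once $\bar{W}$ is semisimple, the dictionary above closes the argument.
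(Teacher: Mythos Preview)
Your strategy---base-change to $\bar\F$, apply Theorem~\ref{inf} to a simple constituent of $\bar W$, and descend via Lemma~\ref{dim}---is exactly the paper's. The only divergence is in how $W_0$ is chosen. You look for a single $W_0$ lying in both the socle and the head of $\bar W$, which leads you to establish that $\bar W$ is semisimple; your sketch via separability of $Z(D)/\F$ is correct and is the standard argument (a separable splitting field for $G$ over $\F$ always exists), so the proof closes. The paper bypasses this entirely by taking $W_0$ to be any simple \emph{quotient} of $\bar W$ for part~(i) and any simple \emph{submodule} of $\bar W$ for part~(ii); these exist unconditionally, so your ``main obstacle'' never arises. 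The price the paper pays is that it must know the two choices of $W_0$ yield the \emph{same} integer $m$: this holds (though the paper does not spell it out) because in the proof of Theorem~\ref{inf} the integer $m$ depends only on the character by which $Z=G\cap\ZB(\GL(V))$ acts on $W_0$, and since $Z$ embeds in $\F^\times$ it already acts on the irreducible $\F G$-module $W$ by a fixed scalar in $\F^\times$, hence by that same scalar on every composition factor of $\bar W$. Either route works; the paper's is shorter, while yours makes the uniformity of $m$ across (i)--(iii) automatic.
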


\begin{proof}
(i) Set $\tilde V := V \otimes_\F\overline{\F}$ and $\tilde W:= W \otimes_\F \overline{\F}$. Let $W_0$ be a simple quotient of 
the $\overline{\F}G$-module $\tilde W$.
We may view $G < \GL(V) < \GL(\tilde V)$ and apply Theorem  
\ref{inf}(i) to $(\overline{\F},\tilde V,W_0)$. Hence for the integer $m$ in Theorem \ref{inf}(i), we have that 
$W_0$ is a submodule of 
$$\sym^m(\tilde V) \cong U \otimes_\F \overline{\F},$$ 
where 
$$U:= \sym^m(V).$$
Thus 
$$0 \neq \Hom_G(W_0,U \otimes_\F \overline{\F}).$$ 
Since $W_0$ is a quotient of $\tilde W$, it follows that
$$0 \neq \Hom_G(\tilde W,U \otimes_\F \overline{\F}).$$ 
Applying Lemma \ref{dim}, we obtain
$$\Hom_G(W,U) \neq 0.$$    
As $W$ is irreducible, it follows that $W$ embeds in $\sym^m(V)$. 

\smallskip
(ii) Keep the notation of (i), but take $W_0$ be a simple submodule of $\tilde W$. By Theorem  
\ref{inf}(iii) applied to $(\overline{\F},\tilde V,W_0)$, and for the integer $m$ in Theorem \ref{inf}(i), we have that 
$W_0$ is a quotient of 
$$\sym^m(\tilde V) \cong U \otimes_\F \overline{\F},$$ 
where $U:= \sym^m(V)$.
Thus 
$$0 \neq \Hom_G(U \otimes_\F \overline{\F},W_0).$$ 
Since $W_0$ is a submodule of $\tilde W$, it follows that
$$0 \neq \Hom_G(U \otimes_\F \overline{\F},\tilde W).$$ 
Applying Lemma \ref{dim}, we obtain
$$\Hom_G(U,W) \neq 0.$$    
As $W$ is irreducible, it follows that $W$ is quotient of $\sym^m(V)$. 

\smallskip
(iii) The same arguments as in (i), (ii) apply to $m+k|G|$ for any $k \in \Z_{\geq 0}$.
\end{proof}

\begin{rmk}
\begin{enumerate}[\rm(i)]
\item Let $V = \F = \overline{\F}$ and $G \leq \GL(V) \cong \GL_1(\F)$ be a finite cyclic subgroup. Then, to cover all irreducible 
$\F G$-modules $W$, we need to use all $\sym^m(V)$ with $m$ up to $|G|-1$.
\item Let $\Char(\F) = p > 0$, and consider the natural $2$-dimensional module $V = \F^2$ of $G \cong \SL_2(p)$. A full set of
non-isomorphic irreducible $\F G$-modules is given by $\sym^m(V)$, $0 \leq m \leq p-1$. Thus in Theorem \ref{main1}, 
we need to use $m$ up to $\approx |G|^{1/3}$.
\item Let $\F = \overline{\F}$ be of characteristic $p > 0$, $f \geq 2$ any integer, and consider the natural $2$-dimensional module 
$V = \F^2$ of $G \cong \SL_2(p^f)$. For any integer $0 \leq m \leq p^{f-1}-1$, the module $\sym^m(V)$ has highest weight $m\varpi_1$, 
where as the $(p^{f-1})^{\mathrm {th}}$ Frobenius twist $V^{(p^{f-1})}$ of $V$ has highest weight $p^{f-1}\varpi_1$, and is irreducible
of dimension $2$. So again in Theorem \ref{main1} we have to use $m$ up to $\approx |G|^{1/3}$.
\item In general, $\dim(W)$ can grow unboundedly compared to $\dim(V)$. Hence, the $m$ in Theorem \ref{main1} cannot be
bounded in terms of $\dim(V)$, unlike the situation in \cite{T}, cf. the examples in (i)--(iii). 
In fact the example in (i) shows that $m$ cannot be bounded in terms of
both $\dim(V)$ and $\dim(W)$, whereas (ii) shows $m$ cannot be bounded in terms of
$\dim(V)$ and $|\ZB(G)|$. Furthermore, the example in (iii) shows that $m$ cannot be bounded in terms of
$\dim(V)$, $\dim(W)$, $|\ZB(G)|$, and $p=\Char(\F)$. It looks plausible 
that $m$ can be bounded only in terms of $|G|$.
\end{enumerate}
\end{rmk}

\end{document}